\theoremstyle{plain}
\newtheorem{theorem}{Theorem}[section]
\newtheorem{lemma}[theorem]{Lemma}
\newtheorem{proposition}[theorem]{Proposition}
\newtheorem{corollary}[theorem]{Corollary}
\theoremstyle{definition}
\newcommand{\calA}{\mathcal{A}}
\newcommand{\calI}{\mathcal{I}}
\newcommand{\calP}{\mathcal{P}}
\newcommand{\set}[1]{\left\{#1\right\}}
\newcommand{\calD}{\mathcal{D}}
\begin{document}
\title[Moore-Penrose inverses of incidence matrices]{Moore-Penrose inverses of incidence matrices}

\author{Ali Azimi}
\address{Department of Mathematics and Applied Mathematics, Xiamen University Malaysia, 43900, Sepang, Selangor Darul Ehsan, Malaysia}
\email{ali.azimi61@gmail.com,\ ali.azimi@xmu.edu.my}
\thanks{The first author was supported by a Xiamen University Malaysia Research Fund (Grant No.
XMUMRF/2023-C11/IMAT/0023).}

\author[R. B. Bapat]{R. B. Bapat}
\address{Indian Statistical Institute, New Delhi, 110016, India}
\email{rbb@isid.ac.in}

\author{M. Farrokhi D. G.}
\address{Research Center for Basic Sciences and Modern Technologies (RBST), Institute for Advanced Studies in Basic Sciences (IASBS), Zanjan 45137-66731, Iran}
\email{m.farrokhi.d.g@gmail.com,\ farrokhi@iasbs.ac.ir}

\subjclass[2020]{Primary 05B20, 15A09; Secondary 05A30, 15A03.}
\keywords{Moore-Penrose inverse, incidence matrices, vector space, design}

\begin{abstract}
We present explicit formulas for Moore-Penrose inverses of some families of set inclusion matrices arising from sets, vector spaces, and designs. 
\end{abstract}
\maketitle
\section{Introduction}
The incidence algebra $\calA_\calP$ of a locally finite poset $\calP$  is the associative algebra of all real-valued functions on $\calP\times\calP$ that vanish on pairs $(p,q)$ with $p\not\preccurlyeq q$. The addition being the ordinary sum of functions while the multiplication $f*g$ of two elements $f,g$ of $\calA_\calP$ is defined as
\[(f*g)(x,y)=\sum_{x\leq z\leq y}f(x,y)g(y,z).\]
The zeta function $\zeta_\calP$ of $\calP$ (or $\calA_\calP$) is the constant function $\zeta_\calP(x,y)=1$ on all $(x,y)$ satisfying $x\preccurlyeq y$. The zeta function $\zeta_\calP$ is invertible in the algebra $\calA_\calP$. The inverse of $\zeta_\calP$ is called the M\"{o}bius function $\mu_\calP$ of $\calP$ generalizing the classical M\"{o}bius function from number theory as well as inclusion-exclusion principle from set theory. M\"{o}bius function of posets has studied since 1935 by Weisner \cite{lw-1, lw-2} and Hall \cite{ph} for the lattice of subgroups of finite groups, and later, in a general setting, by Rota \cite{gr} in 1964. 

The incidence matrix $\calI_\calP$ of $\calP$ is the $0-1$ matrix whose rows and columns are indexed by elements of $\calP$ with $(p,q)$ entry being $1$ if $p\preccurlyeq q$ and $0$ otherwise. The matrix $\calI_\calP$ is actually the matrix representation of the zeta function of $\calP$. Accordingly, $\calI_\calP$ is invertible and its inverse is the matrix representation of the M\"{o}bius function of $\calP$.

Beside incidence matrices $\calI_\calP$ of posets, their submatrices, we still call incidence matrices, are also of particular interest when studying relationships between two classes of objects that appear naturally in various contexts like design theory, coding theory, algebraic graph theory, representation theory, finite geometry, etc., see for instance \cite{mb-ps, dbc-ps-qx, nh, rmw1982, rmw1990}. Set inclusion incidence matrices arising from subsets of a set or subspaces of a vertex space are typical and most accessible examples of incidence matrices having many applications in combinatorial contexts, in particular design theory and extremal set theory \cite{tb-dj-hl, pf, rmw1982, rmw1990}. 

Given a set $S$ with $n$ elements and $0\leq r\leq c\leq n$, the \textit{set inclusion incidence matrix} $M=M(n;r,c)$ of $S$ is an $\binom{n}{r}\times\binom{n}{c}$ matrix defined as follows: the rows and columns of $M$ are indexed by $r$-subsets and $c$-subsets of $S$, respectively, with $(R,C)$-entry equal to $\delta_{R\subseteq C}$, where
\[\delta_{R\subseteq C}:=\begin{cases}
1,&R\subseteq C,\\
0,&R\nsubseteq C
\end{cases}\]
is the Kronecker delta. The set inclusion matrix $M(n;r,c)$ of a vector space $V$ of dimension $n$ over a finite field with $q$ elements is defined analogously with rows and columns are indexed by $r$-dimensional and $s$-dimensional subspaces of $V$, respectively. In this case, $M(n;r,c)$ is an $\binom{n}{r}_q\times\binom{n}{c}_q$ matrix with $\binom{n}{i}_q$ being the Gaussian binomial coefficient. The above definitions can be extended to set inclusion matrices of sub-algebras of any algebraic structures as well.

A \textit{generalized inverse} of an $m \times n$ matrix $A$ is an $n \times m$ matrix $G$ satisfying $AGA=A$. The \textit{Moore-Penrose inverse} of $A$, denoted by $A^+$, is a generalized inverse of $A$ satisfying the following three more equations:
\[A^+AA^+=A^+,\quad (AA^+)^T=AA^+,\quad (A^+A)^T=A^+A,\]
where $M^T$ denotes the transpose of a matrix $M$ (see \cite{em, rp}). It is known that any two Moore-Penrose inverses of a matrix are equal and that any complex matrix has a Moore-Penrose inverse (see \cite{slc-cdm, isr-gre} for basic properties of Moore-Penrose inverses). Notice that Moore-Penrose inverses need not exist over any field.

Wilson \cite{rmw1990} studies set inclusion matrices and gives a formula for their rank over fields of prime characteristics. Extending the results of Wilson, Bapat \cite{rbb2000} gives implicit formulas for the Moore-Penrose inverse of set inclusion matrices over fields of characteristic zero. Moreover, a necessary and sufficient condition for these matrices to admit Moore-Penrose inverses over fields of prime characteristics as well as an implicit formula for their Moore-Penrose inverses (when exist) is offered. 

The aim of this paper is to give explicit combinatorial formulas for Moore-Penrose inverses of several classes of incidence matrices. In Section 2, we introduce set inclusion incidence matrices and give their Moore-Penrose inverses. Likewise, in Section 3, we introduces set includion incidence matrices arising from vector spaces and give their Moore-Penrose inverses by extending the results of Section 2. Using information on designs, we also give a simple formula for Moore-Penrose inverses of incidence matrices of designs in Section 4.
\section{Set inclusion incidence matrices}
Let $r$, $c$, and $n$ be non-negative integers satisfying $0\leq r\leq c\leq n$. The set inclusion incidence matrix $M=M(n;r,c)$ is an $\binom{n}{r}\times\binom{n}{c}$ matrix defined as follows: the rows and columns of $M$ are indexed by $r$-subsets and $c$-subsets of $[n]:=\{1,\ldots,n\}$, respectively, with $(R,C)$-entry equal to $\delta_{R\subseteq C}$. Let $M^*$ be the matrix whose rows and columns are indexed by $c$-subsets and $r$-subsets of $[n]$, respectively, and its $(C,R)$-entry is given by
\[M^*_{C,R}=(-1)^{r-i}\frac{\binom{c-i-1}{r-i}}{\binom{N-r}{c-r}\binom{N-c}{r-i}},\]
where $N=\max\{n, r+c\}$ and $i=|R\cap C|$. We show that $M^*$ is the Moore-Penrose inverse of $M$. Indeed, $M^*$ has more interesting properties described in the next lemma. To this end, we apply the following result of Ruiz.
\begin{proposition}[\cite{smr}]\label{Ruiz's identity}
Let $n\geq0$ and $p(x)$ be a polynomial of degree less than $n$. Then
\[\sum_{i=0}^n(-1)^i\binom{n}{i}p(i)=0.\]
\end{proposition}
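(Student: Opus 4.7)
My plan is to recognize the alternating sum as (a sign times) the $n$-th forward difference of $p$ evaluated at $0$, and then invoke the standard fact that the $n$-th finite difference annihilates polynomials of degree less than $n$.

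First I would introduce the forward difference operator $\Delta$ acting on functions $f\colon\mathbb{Z}\to\mathbb{R}$ by $(\Delta f)(x)=f(x+1)-f(x)$, and prove by induction on $n$ the explicit formula
\[(\Delta^n f)(0)=\sum_{i=0}^n(-1)^{n-i}\binom{n}{i}f(i),\]
using Pascal's identity $\binom{n+1}{i}=\binom{n}{i}+\binom{n}{i-1}$ in the induction step. Multiplying by $(-1)^n$ shows that the sum appearing in the proposition equals $(-1)^n(\Delta^n p)(0)$, so it is enough to establish that $\Delta^n p\equiv 0$ whenever $\deg p<n$.

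Next I would observe that $\Delta$ lowers the degree of any non-constant polynomial by exactly one, since for $p(x)=ax^d+\text{lower order}$ the binomial expansion gives $(\Delta p)(x)=da\,x^{d-1}+\text{lower order}$; iterating, $\Delta^n$ maps every polynomial of degree $<n$ to the zero polynomial, and evaluation at $0$ then yields the identity.

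The only mildly delicate point is the bookkeeping in the inductive identification of $(\Delta^n f)(0)$ with the alternating binomial sum, which is a routine application of Pascal's rule; the rest is immediate. As an alternative I would keep in mind the linearity argument that reduces the claim to checking it on the basis $\bigl\{\binom{x}{k}\bigr\}_{k<n}$ of polynomials of degree $<n$, where the Vandermonde/binomial identity $\sum_{i}(-1)^i\binom{n}{i}\binom{i}{k}=(-1)^k\binom{n}{k}(1-1)^{n-k}=0$ finishes the proof; this provides a sanity check for the finite-difference argument.
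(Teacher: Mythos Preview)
Your argument is correct: identifying the alternating sum with $(-1)^n(\Delta^n p)(0)$ and then noting that $\Delta$ strictly lowers polynomial degree is the standard and cleanest route, and your alternative check via the basis $\binom{x}{k}$ is also valid.

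There is, however, nothing to compare with in the paper itself: this proposition is stated with a citation to Ruiz and is not proved in the paper. Your write-up therefore supplies a proof where the paper simply imports the result.
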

\begin{lemma}\label{set inclusion incidence matrix: MM^*=I or M^*M=I}
With the above notation, we have
\begin{itemize}
\item[(i)]$MM^*=I$ if $n\geq r+c$.
\item[(ii)]$M^*M=I$ if $n\leq r+c$.
\end{itemize}
\end{lemma}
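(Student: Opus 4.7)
My plan is to compute the entries of $MM^*$ and $M^*M$ directly in each case and rewrite the resulting alternating sums in the form required by Proposition~\ref{Ruiz's identity}; the parameter playing the role of $n$ in Ruiz's identity will be the ``distance'' $m$ between the two subsets indexing the entry.

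For part (i), fix $r$-subsets $R, R'$, set $j = |R \cap R'|$ and $m = r - j$. Expanding $(MM^*)_{R,R'} = \sum_{C \supseteq R,\ |C|=c} M^*_{C, R'}$ and grouping by $i = |C \cap R'|$, the containment $R \subseteq C$ forces $j \leq i \leq r$, and the number of $c$-subsets $C \supseteq R$ with $|C \cap R'| = i$ equals $\binom{r-j}{i-j}\binom{n-2r+j}{c-r-i+j}$ (choose $i-j$ elements from $R' \setminus R$ and the remaining $c-r-i+j$ from $[n] \setminus (R \cup R')$). Using $n \geq r+c$ to set $N = n$ and changing variables to $\ell = i - j$, the entry becomes
\[
(MM^*)_{R,R'} = \binom{n-r}{c-r}^{-1} \sum_{\ell=0}^{m} (-1)^\ell \binom{m}{\ell} f(\ell).
\]
The technical heart is to verify by routine cancellation of factorials that
\[
f(\ell) = \frac{(n-r-m)!}{(c-r-1)!(n-c)!}\cdot (c-r+\ell-1)(c-r+\ell-2)\cdots(c-r-m+\ell+1),
\]
exhibiting $f$ as a polynomial in $\ell$ of degree $m - 1$ (empty product equal to $1$ when $m = 1$). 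Proposition~\ref{Ruiz's identity} then forces the alternating sum to vanish whenever $m \geq 1$, and in the diagonal case $R = R'$ only $\ell = 0$ contributes, giving $f(0)/\binom{n-r}{c-r} = 1$.

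For part (ii), I run the dual computation for $c$-subsets $C, C'$ with $j = |C \cap C'|$ and $m = c - j$. Grouping $R \subseteq C'$ with $|R| = r$ by $i = |R \cap C|$ produces the count $\binom{j}{i}\binom{c-j}{r-i}$; substituting $k = r - i$ and using $n \leq r+c$ to set $N = r + c$, the same factorial telescoping yields $\binom{c}{r}^{-1}\sum_{k=0}^m(-1)^k\binom{m}{k}g(k)$ with $g$ of degree $m-1$. A crucial input here is the bound $m \leq n - c \leq r$, which follows from $|C \cup C'| \leq n$ together with the hypothesis $n \leq r+c$; this ensures the summation range $\{0,\ldots,m\}$ agrees with the combinatorially nonzero range of $k$ so that Ruiz's identity applies directly. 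The main obstacle I anticipate is the factorial bookkeeping that pins down the polynomial degree as exactly $m-1$; once this is in hand, Ruiz's identity closes both arguments and the diagonal entries follow by direct inspection.
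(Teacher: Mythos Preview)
Your approach is essentially identical to the paper's: both count the relevant $C$'s (resp.\ $R$'s) by intersection size with the second index, telescope the resulting factorials to exhibit the summand as $\binom{m}{\ell}$ times a polynomial of degree $m-1$, and then invoke Ruiz's identity, with the diagonal case read off directly. The paper records that polynomial as $\binom{c-k-j-1}{r-k-1}$ in part~(i) and $\binom{c-r+j-1}{c-k-1}$ in part~(ii), which matches your falling product up to the harmless reindexing $\ell\leftrightarrow m-\ell$ (your stated substitution $\ell=i-j$ actually yields the product with $-\ell$ in place of $+\ell$, but the degree count and hence the conclusion are unaffected).
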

\begin{proof}
(i) Let $R$ and $R'$ be $r$-subsets of $[n]$ and put $k:=|R\cap R'|$. A simple verification shows that
\[(MM^*)_{R,R'}=\sum_{i=k}^r (-1)^{r-i}\frac{\binom{r-k}{i-k}\binom{n-2r+k}{c-r-i+k}\binom{c-i-1}{r-i}}{\binom{n-r}{c-r}\binom{n-c}{r-i}}.\]
If $R=R'$, then $k=r$ and $(MM^*)_{R,R'}=1$. Now, assume that $R\neq R'$, i.e. $k<r$. Then
\begin{align*}
(MM^*)_{R,R'}&=\frac{(c-r)(-1)^r}{(r-k)\binom{n-r}{r-k}}\sum_{i=k}^r(-1)^i\binom{c-i-1}{r-k-1}\binom{r-k}{i-k}\\
&=\frac{(c-r)(-1)^{r-k}}{(r-k)\binom{n-r}{r-k}}\sum_{j=0}^{r-k}(-1)^j\binom{c-k-j-1}{r-k-1}\binom{r-k}{j}=0
\end{align*}
by Proposition \ref{Ruiz's identity} for $\binom{c-k-j-1}{r-k-1}$ is a polynomial of order less than $r-k$ in $j$.

(ii) Let $C$ and $C'$ be $c$-subsets of $[n]$ and put $k:=|C\cap C'|$. Since $n\leq r+c$, we have $N=r+c$ and $2c-n\leq k\leq c$. A simple verification shows that
\[(M^*M)_{C,C'}=\sum_{i=\max\{0,r+k-c\}}^{\min\{k,r\}}(-1)^{r-i}\frac{\binom{k}{i}\binom{c-k}{r-i}\binom{c-i-1}{r-i}}{\binom{c}{r}\binom{r}{i}}.\]
If $C=C'$, then $k=c$ and $(M^*M)_{C,C'}=1$. Now, suppose that $C\neq C'$, i.e. $k<c$. Then
\begin{align*}
(M^*M)_{C,C'}&=\frac{(c-r)(-1)^r}{(c-k)\binom{c}{k}}\sum_{i=\max\set{0,k+r-c}}^{\min\set{k,r}}(-1)^i\binom{c-i-1}{c-k-1}\binom{c-k}{r-i}\\
&=\frac{c-r}{(c-k)\binom{c}{k}}\sum_{j=\max\set{0,r-k}}^{\min\set{r,c-k}}(-1)^j\binom{c-r+j-1}{c-k-1}\binom{c-k}{j}.
\end{align*}
From $2c-k=|C\cup C'|\leq n\leq r+c$, we get $c-k\leq r$ so that $\min\{r,c-k\}=c-k$. Also, $\binom{c-r+j-1}{c-k-1}=0$ when $j<r-k$. Thus
\[(M^*M)_{C,C'}=\frac{c-r}{(c-k)\binom{c}{k}}\sum_{j=0}^{c-k}(-1)^j\binom{c-r+j-1}{c-k-1}\binom{c-k}{j}=0\]
by Proposition \ref{Ruiz's identity} for $\binom{c-r+j-1}{c-k-1}$ is a polynomial of degree less than $c-k$ in $j$.
\end{proof}
\begin{theorem}\label{set inclusion incidence matrix: mpinv}
$M^+=M^*$.
\end{theorem}
\begin{proof}
First observe that $MM^*$ and $M^*M$ are symmetric for any $(R,R')$-entry of $MM^*$ and $(C,C')$-entry of $M^*M$ depends only on the size of intersections $R\cap R'$ and $C\cap C'$, respectively. On the other hand, by Lemma \ref{set inclusion incidence matrix: MM^*=I or M^*M=I}, either $MM^*=I$ or $M^*M=I$. Therefore $MM^*M=M$ and $M^*MM^*=M^*$ so that $M^*=M^+$ is the Moore-Penrose inverse of $M$.
\end{proof}

Bapat \cite{rbb2000} gives necessary and sufficient conditions for $M$ to have a Moore-Penrose inverse over fields of prime characteristics, and gives an implicit formula for the Moore-Penrose inverse when it exists. It is evident that $M^*$ is a Moore-Penrose inverse of $M$ over a field of characteristic $p$ when denominators of the given entries are non-zero. Hence, $M^*$ yields an explicit formula for the Moore-Penrose inverse of $M$ over a field of characteristic $p$ provided that $p$ does not divides $\binom{N-r}{c-r}\binom{N-c}{0}\cdots\binom{N-c}{r}$, for instance for $p>\max\{n-r,c\}$.
\section{Vector space inclusion incidence matrices}
Let $r$, $c$, and $n$ be non-negative integers satisfying $0\leq r\leq c\leq n$ and $q$ be a prime power. The vector space inclusion incidence matrix $M=M(n,q;r,c)$ is an $\binom{n}{r}_q\times\binom{n}{c}_q$ matrix defined as follows: the rows and columns of $M$ are indexed by $r$--dimensional subspaces and $c$-dimensional subspaces of $GF(q)^n$, respectively, with $(R,C)$-entry equal to $\delta_{R\subseteq C}$. Let $M^*$ be the matrix whose rows and columns are indexed by $c$-dimensional subspaces and $r$-dimensional subspaces of $GF(q)^n$, respectively, and its $(C,R)$-entry is given by
\[M^*_{C,R}=(-1)^{r-i}\frac{\binom{c-i-1}{r-i}_q}{\binom{N-r}{c-r}_q\binom{N-c}{r-i}_q}\cdot\frac{1}{q^{(c-r)(r-i)+\binom{r-i}{2}}},\]
where $N=\max\{n, r+c\}$ and $i=\dim(R\cap C)$. As in the previous section, we aim to show that $M^*$ is the Moore-Penrose inverse of $M$. To this end, we need to recall some notation and prove some preliminary lemmas.

In what follows, for a non-negative integer $n$ and prime power $q$, $[n]_q$ and $[n]_q!$ stand for the $q$-integer $(q^n-1)/(q-1)$ and the $q$-factorial $[n]_q\cdots[1]_q$, respectively. Also, for $0\leq m\leq n$, the $q$-binomial $[n]_q!/[m]_q![n-m]_q!$ is denoted by $\binom{n}{m}_q$. Note that the $q$-binomial $\binom{n}{m}_q$ is defined to be zero if the condition $0\leq m\leq n$ does not hold.
\begin{lemma}
Let $V$ be a vector space of dimension $n$ over a finite field with $q$ elements and $0\leq k\leq i\leq r\leq c\leq n-r$. Let $R$ and $R'$ be subspaces of $V$ of dimension $r$ with a $k$-dimensional intersection. Then the number of subspaces $C\supseteq R$ of $V$ of dimension $c$ such that $C\cap R'$ is $i$-dimensional is equal to 
\[\binom{r-k}{i-k}_q\binom{n-2k+r}{c-r-i+k}_qq^{(c-r-i+k)(r-i)}.\]
\end{lemma}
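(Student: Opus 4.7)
The plan is to organize the count according to the intermediate subspace $D := C \cap R'$, and then for each choice of $D$ count the number of valid $C$'s by passing to an appropriate quotient where the constraint becomes a standard ``trivial intersection'' condition.

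First I would set up the basic parameters. Since $R \subseteq C$ forces $R \cap R' \subseteq C \cap R' = D$, and $D \subseteq R'$ has dimension $i$, the subspace $D$ is an $i$-dimensional subspace of $R'$ containing the fixed $k$-dimensional subspace $R \cap R'$. By passing to the quotient $R'/(R \cap R')$ of dimension $r-k$, the number of such $D$ is $\binom{r-k}{i-k}_q$. This gives the first factor in the claimed formula.

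Next, with $D$ fixed, I would count $c$-dimensional subspaces $C$ with $R \subseteq C$ and $C \cap R' = D$ exactly. The key preliminary observation is that $(R+D)\cap R' = D$: if $r_0 + d \in R'$ with $r_0 \in R$ and $d \in D \subseteq R'$, then $r_0 \in R \cap R' \subseteq D$, so the sum lies in $D$. Hence $W := R+D$ has dimension $r + i - k$, already satisfies the required intersection, and is contained in $C$. Now I would quotient by $W$: write $\bar V = V/W$ (dimension $n - r - i + k$), $\bar{R'} = R'/D$ (dimension $r - i$, by the previous identity), and $\bar C = C/W$ (dimension $c - r - i + k$). The condition $C \cap R' = D$ translates exactly to $\bar C \cap \bar{R'} = 0$ in $\bar V$.

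The final ingredient is the standard $q$-analog formula: the number of $d$-dimensional subspaces of an $m$-dimensional $GF(q)$-space that intersect a fixed $e$-dimensional subspace trivially equals $\binom{m-e}{d}_q \, q^{de}$ (obtained, for instance, by choosing a basis and sweeping out coset representatives). Applying this with $m = n-r-i+k$, $e = r-i$, $d = c-r-i+k$ yields $\binom{n-2r+k}{c-r-i+k}_q \, q^{(c-r-i+k)(r-i)}$ (so I would expect the stated $\binom{n-2k+r}{c-r-i+k}_q$ to read $\binom{n-2r+k}{c-r-i+k}_q$, matching the set-theoretic analog in Lemma~\ref{set inclusion incidence matrix: MM^*=I or M^*M=I}(i)). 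Multiplying the two counts gives the claim. The main obstacle is really just the bookkeeping — making sure the bounds $0 \le k \le i \le r$ and $c \le n - r$ ensure that all the dimensions involved are nonnegative so that the relevant $q$-binomials are nonzero and the quotient counts are valid; the algebraic geometry of the situation itself is routine once one identifies $D$ as the correct parameter.
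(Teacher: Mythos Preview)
Your proof is correct and follows essentially the same route as the paper: identify the intersection $D=C\cap R'$, quotient so that the remaining condition becomes a trivial-intersection count, and apply the standard formula from \cite{mfdg}; the only cosmetic difference is that the paper quotients in two stages (first by $R$, then by the image of $D$) whereas you quotient once by $W=R+D$, which amounts to the same thing. You are also right that the displayed $\binom{n-2k+r}{c-r-i+k}_q$ should read $\binom{n-2r+k}{c-r-i+k}_q$, consistent with how the lemma is actually used later.
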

\begin{proof}
Since $C\supseteq R$ we can work in $V/R$ and assume without loss of generality that $R=0$. Then $C\cap R'$ has dimension $i-k$. Since $R'$ has dimension $r-k$ there are $\binom{r-k}{i-k}$ choices for $C\cap R'$. Passing to the quotient $V/C\cap R'$, we can further assume that $C\cap R'=0$. Then $V$, $C$, and $R'$ have dimensions $n-r-i+k$, $c-r-i+k$, and $r-i$, respectively, with $C\cap R'=0$. The number of subspaces $C$ of $V$ with these properties is equal to 
\[\binom{n-2k+r}{c-r-i+k}_qq^{(c-r-i+k)(r-i)}\]
(see \cite[Lemma 2.6]{mfdg}), from which the result follows.
\end{proof}
\begin{lemma}
Let $V$ be a vector space of dimension $n$ over a finite field with $q$ elements and $0\leq i\leq k\leq r\leq c$. Let $C$ and $C'$ be subspaces of $V$ of dimension $c$ with a $k$-dimensional intersection. Then the number of subspaces $R\supseteq C'$ of $V$ of dimension $r$ such that $R\cap C$ is $i$-dimensional is equal to 
\[\binom{k}{i}_q\binom{c-k}{r-i}_qq^{(r-i)(k-i)}.\]
\end{lemma}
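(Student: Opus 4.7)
The plan is to mirror the quotient argument used in the preceding lemma, but now working inside $C'$ rather than inside $V/R$. First I note that with $\dim R=r\le c=\dim C'$ the inclusion $R\supseteq C'$ as written is impossible; the intended hypothesis must be $R\subseteq C'$, which is the $q$-analogue of the set-theoretic count $\binom{k}{i}\binom{c-k}{r-i}$ that appears in the proof of Lemma~\ref{set inclusion incidence matrix: MM^*=I or M^*M=I}(ii). Once this is corrected, I would set $W:=C\cap C'$, so that $\dim W=k$ and $W\subseteq C'$. The key observation is that for any $R\subseteq C'$ one has
\[R\cap C=R\cap(C\cap C')=R\cap W,\]
so the condition $\dim(R\cap C)=i$ is the same as $\dim(R\cap W)=i$. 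This reduces the problem to a counting problem entirely within the $c$-dimensional space $C'$, equipped with its distinguished $k$-dimensional subspace $W$.

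Next I would stratify the count by the subspace $U_{0}:=R\cap W$. Choosing $U_{0}$ as an $i$-dimensional subspace of the $k$-dimensional space $W$ contributes the factor $\binom{k}{i}_{q}$. Having fixed $U_{0}$, I need to count $r$-dimensional subspaces $R$ of $C'$ with $R\supseteq U_{0}$ and $R\cap W=U_{0}$. Passing to the quotient $\overline{C'}:=C'/U_{0}$ (of dimension $c-i$) and writing $\overline{W}:=W/U_{0}$ (of dimension $k-i$), this translates into counting $(r-i)$-dimensional subspaces of $\overline{C'}$ whose intersection with $\overline{W}$ is trivial.

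For the latter count I would invoke the same standard fact already cited in the previous lemma, namely \cite[Lemma 2.6]{mfdg}: the number of $s$-dimensional subspaces of a $d$-dimensional space disjoint from a fixed $t$-dimensional subspace is $\binom{d-t}{s}_{q}q^{st}$. Applied with $d=c-i$, $t=k-i$, $s=r-i$ this gives $\binom{c-k}{r-i}_{q}q^{(r-i)(k-i)}$. Multiplying the two independent counts yields the claimed formula. The only place requiring care is the identity $R\cap C=R\cap W$, which depends crucially on $R\subseteq C'$; after that, the remainder is a direct repetition of the quotient-plus-complement argument used in the proof of the preceding lemma.
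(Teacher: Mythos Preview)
Your proposal is correct and follows essentially the same route as the paper: first fix $R\cap C$ as an $i$-dimensional subspace of $C\cap C'$ (the factor $\binom{k}{i}_q$), then quotient by it and count complements via \cite[Lemma 2.6]{mfdg} to obtain $\binom{c-k}{r-i}_q q^{(r-i)(k-i)}$. You are in fact more careful than the paper in two places: you flag the typo $R\supseteq C'$ (the paper's own proof tacitly uses $R\subseteq C'$, since it asserts $R\cap C\subseteq C\cap C'$), and you make explicit the identity $R\cap C=R\cap W$ that justifies why the choice of $R\cap C$ can be made inside $W=C\cap C'$ and why, after quotienting, the remaining count lives entirely inside $C'$.
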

\begin{proof}
First observe that there are $\binom{k}{i}$ choices for $R\cap C$ as $R\cap C$ is an $i$-dimensional subspace of $C\cap C'$ of dimension $k$. Passing to the quotient $V/R\cap C$, we can assume without loss of generality that $R\cap C=0$. Then $R$, $C\cap C'$, and $C'$, have dimensions $r-i$, $k-i$, and $c-i$, respectively, with $R\cap(C\cap C')=0$. The number of subspaces $R$ of $V$ with these properties is equal to 
\[\binom{c-k}{r-i}_qq^{(r-i)(k-i)}\]
(see \cite[Lemma 2.6]{mfdg}), from which the result follows.
\end{proof}

In order to obtain a $q$-analogue of Proposition \ref{Ruiz's identity}, we need to use Gauss' binomial formula.
\begin{proposition}[Gauss' binomial formula {\cite[5.5]{vk-pc}}]\label{Gauss' binomial formula}
Let $x$, $a$, and $q$ be real numbers and $n\geq1$ be a positive integer. Then
\[(x+a)(x+qa)\cdots(x+q^{n-1}a)=\sum_{i=0}^n\binom{n}{i}_qq^{\binom{i}{2}}a^ix^{n-i}.\]
\end{proposition}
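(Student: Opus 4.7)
The plan is to prove Gauss' binomial formula by induction on $n$, invoking the $q$-Pascal recurrence
\[\binom{n+1}{i}_q = \binom{n}{i}_q + q^{n+1-i}\binom{n}{i-1}_q,\]
with the convention that $\binom{n}{j}_q = 0$ when $j<0$ or $j>n$. The base case $n=1$ is immediate, as both sides equal $x+a$.

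For the inductive step I would multiply the hypothesis for $n$ by the new factor $(x+q^n a)$. Splitting the product into two sums, re-indexing the second by $i \mapsto i-1$, and collecting yields the coefficient of $a^i x^{n+1-i}$ in the form
\[\binom{n}{i}_q q^{\binom{i}{2}} + \binom{n}{i-1}_q q^{\binom{i-1}{2}+n}.\]
Factoring $q^{\binom{i}{2}}$ out of both terms and applying the elementary identity $\binom{i}{2}-\binom{i-1}{2} = i-1$, the remaining bracket becomes exactly $\binom{n}{i}_q + q^{n+1-i}\binom{n}{i-1}_q$, which equals $\binom{n+1}{i}_q$ by $q$-Pascal. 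The boundary values $i=0$ and $i=n+1$ are absorbed automatically by the vanishing convention.

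The only real obstacle is keeping the $q$-exponents straight; the key piece of arithmetic is $n + \binom{i-1}{2} - \binom{i}{2} = n+1-i$. As a cleaner alternative, one can argue directly from the expansion: the monomial $a^i x^{n-i}$ arises by selecting $q^{k}a$ from the factors indexed by a strictly increasing chain $0 \leq k_1 < \cdots < k_i \leq n-1$, contributing $q^{k_1+\cdots+k_i}a^i x^{n-i}$. The substitution $k_j' := k_j - (j-1)$ carries strict chains to weakly increasing sequences $0 \leq k_1' \leq \cdots \leq k_i' \leq n-i$ and shifts the $q$-exponent by $\binom{i}{2}$, so the coefficient factors as $q^{\binom{i}{2}}$ times the standard generating function $\binom{n}{i}_q$ over Young diagrams fitting in an $i \times (n-i)$ box. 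Either route is routine; the inductive one is shorter to write down and relies only on $q$-Pascal.
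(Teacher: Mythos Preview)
Your inductive argument is correct: the $q$-Pascal recurrence you quote is one of the two standard forms, the exponent bookkeeping $n+\binom{i-1}{2}-\binom{i}{2}=n+1-i$ is right, and the boundary terms are handled by the vanishing convention. The alternative bijective argument via strict-to-weak reindexing is also correct and is the usual combinatorial proof.

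There is nothing to compare against, however: the paper does not prove this proposition at all. It is quoted verbatim from Kac--Cheung \cite[5.5]{vk-pc} and used only as input to Corollary~\ref{Ruiz's q-analogue formula}. So your write-up supplies a proof where the paper simply cites one; the inductive route you give is essentially the argument found in the cited reference.
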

\begin{corollary}\label{Ruiz's q-analogue formula}
Let $n$ be a positive integer. Then
\[\sum_{i=0}^n(-1)^i\binom{n}{i}_q\cdot\frac{1}{q^{i(n-i)+\binom{i}{2}}}\cdot p_m(q^i)=0\]
for all polynomials $p_m(x)$ of degree $0\leq m<n$.
\end{corollary}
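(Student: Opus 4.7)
The plan is to reduce to the monomial case $p_m(x)=x^m$ and then extract the vanishing from Gauss' binomial formula via a carefully chosen specialization. Since the sum is linear in $p_m$ and the space of polynomials of degree strictly less than $n$ is spanned by the monomials $1,x,\ldots,x^{n-1}$, it suffices to establish the identity when $p_m(x)=x^m$ for each fixed $m\in\{0,1,\ldots,n-1\}$. For this choice $p_m(q^i)=q^{im}$, and the required identity reduces to
\[\sum_{i=0}^n(-1)^i\binom{n}{i}_q q^{im-i(n-i)-\binom{i}{2}}=0.\]

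To produce this identity, I would invoke Proposition~\ref{Gauss' binomial formula} with $a=-1$, which gives
\[\prod_{k=0}^{n-1}(x-q^k)=\sum_{i=0}^n(-1)^i\binom{n}{i}_q q^{\binom{i}{2}}x^{n-i},\]
and then specialize $x=q^{n-1-m}$. Because $0\leq n-1-m\leq n-1$, the factor with $k=n-1-m$ on the left-hand side is zero, so the resulting right-hand sum
\[\sum_{i=0}^n(-1)^i\binom{n}{i}_q q^{\binom{i}{2}+(n-1-m)(n-i)}=0\]
vanishes.

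It then remains to match the exponents of $q$. A direct expansion shows that $\bigl(im-i(n-i)-\tbinom{i}{2}\bigr)-\bigl(\tbinom{i}{2}+(n-1-m)(n-i)\bigr)$ is independent of $i$ (one checks that it equals $n(m+1-n)$). Hence the target sum equals $q^{n(m+1-n)}$ times the Gauss-formula sum, and both vanish. The main obstacle is purely the algebraic bookkeeping in this exponent comparison; the substitution $x=q^{n-1-m}$ is in fact forced by requiring the $i$-linear parts of the two exponents to agree, after which the remaining terms collapse to an $i$-independent constant that factors out of the sum.
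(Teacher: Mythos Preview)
Your proof is correct and follows essentially the same approach as the paper: reduce by linearity to monomials $p_m(x)=x^m$ and specialize Gauss' binomial formula so that one factor on the product side vanishes. The paper substitutes $x=-q^{m}a$ while you set $a=-1$ and $x=q^{n-1-m}$; these are the same family of specializations up to the relabeling $m\leftrightarrow n-1-m$, and your exponent bookkeeping (showing the discrepancy is the $i$-independent constant $n(m+1-n)$) is accurate.
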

\begin{proof}
The result follows by replacing $x$ by $-q^ma$ ($0\leq m<n$) in Gauss' binomial formula and assuming that $p_m(x)=x^m$.
\end{proof}

Now, we can proceed to show that $M^*$ is actually the Moore-Penrose inverse of $M$ by showing that the results of Lemma \ref{set inclusion incidence matrix: MM^*=I or M^*M=I} remain unchanged in the case of vector space inclusion incidence matrices.
\begin{lemma}\label{set inclusion incidence matrix: MM^*=I or M^*M=I}
With the notation above, we have
\begin{itemize}
\item[(i)]$MM^*=I$ if $n\geq r+c$;
\item[(ii)]$M^*M=I$ if $n\leq r+c$.
\end{itemize}
\end{lemma}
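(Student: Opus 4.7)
The plan is to mirror the proof of the set-theoretic Lemma from Section~2, substituting the two subspace-counting lemmas just proved for the classical set counts and Corollary \ref{Ruiz's q-analogue formula} for the classical Ruiz identity.

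For part (i), I fix two $r$-dimensional subspaces $R, R'$ with $k := \dim(R \cap R')$ and write
\[
(MM^*)_{R, R'} = \sum_{C \supseteq R} M^*_{C, R'}.
\]
Since $M^*_{C,R'}$ depends on $C$ only through $i := \dim(C \cap R')$, grouping by $i$ (which ranges from $k$ to $r$) and applying the first counting lemma gives a single sum in $i$, with the counting weight $\binom{r-k}{i-k}_q \binom{n-2r+k}{c-r-i+k}_q q^{(c-r-i+k)(r-i)}$ multiplying $M^*_{C,R'}$. The case $R = R'$ (so $k = r$) collapses to a single term $i = r$ which simplifies to $1$. For $k < r$, I change the summation variable to $j = i - k$ and reshape the expression, using the identity $\binom{r-i}{2} + (r-i)(c-r-i+k) - (c-r)(r-i) = -j(r-k-j) - \binom{j}{2} + C_0$ for an $i$-independent constant $C_0$, so that the sum takes the form
\[
\mathrm{const} \cdot \sum_{j = 0}^{r - k} (-1)^j \binom{r-k}{j}_q \frac{1}{q^{j(r-k-j)+\binom{j}{2}}} \cdot p(q^j),
\]
where $p(x)$ emerges from $\binom{c-i-1}{r-i}_q \binom{n-2r+k}{c-r-i+k}_q$ after the $q$-power cancellation. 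The key fact is that any $q$-binomial $\binom{a-i}{b}_q$ with $b$ and $a$ fixed is, up to powers of $q^i$, a polynomial in $q^i$ of degree $b$; combining the two factors produces a polynomial in $q^j$ of degree at most $r-k-1$, so Corollary \ref{Ruiz's q-analogue formula} with $n = r-k$ forces the sum to vanish.

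For part (ii), I fix two $c$-dimensional subspaces $C, C'$ with $k := \dim(C \cap C')$ and expand
\[
(M^*M)_{C, C'} = \sum_{R \subseteq C'} M^*_{C, R},
\]
grouping by $i := \dim(R \cap C)$. The second counting lemma supplies the weights, giving a sum over $\max\{0, r+k-c\} \leq i \leq \min\{k, r\}$. The diagonal case $k = c$ collapses to a single term equal to $1$. For $k < c$, the hypothesis $n \leq r + c$ forces $2c - k \leq n \leq r+c$, hence $c - k \leq r$, so $\min\{r, c-k\} = c-k$, matching the set-theoretic argument; substituting $j = r - i$ and invoking the same sort of polynomial identification brings the sum into the form required by Corollary \ref{Ruiz's q-analogue formula} with $n = c-k$, producing zero.

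The main obstacle is the bookkeeping of $q$-exponents: the intrinsic weight $q^{-(c-r)(r-i) - \binom{r-i}{2}}$ carried by $M^*$ must combine with the extrinsic weights $q^{(c-r-i+k)(r-i)}$ in part~(i) and $q^{(r-i)(k-i)}$ in part~(ii) so that, after the change of variable $j = i - k$ or $j = r - i$, the exponent assumes exactly the shape $-j(n-j) - \binom{j}{2}$ demanded by the $q$-Ruiz identity. The set-theoretic proof gives a trustworthy combinatorial template for the $q$-binomial cancellations, and the standard factorizations $\binom{r-i}{2} = \binom{r-k}{2} - \binom{i-k}{2} - (r-i)(i-k)$ and its analogue reduce the $q$-power verification to a routine computation; I expect no conceptual surprise beyond this alignment of exponents.
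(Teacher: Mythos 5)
Your proposal follows the paper's proof essentially verbatim: group the sum by the intersection dimension $i$, feed in the two subspace-counting lemmas, and kill the off-diagonal entries with Corollary \ref{Ruiz's q-analogue formula}. The only place you diverge is the exponent bookkeeping, and there your stated identity is not quite right: with $m=r-k$ and $j=i-k$ the net exponent of $1/q$ in the $i$-th summand is $j(m-j)+\binom{m-j}{2}=\binom{m}{2}-\binom{j}{2}$, which differs from the $q$-Ruiz target $j(m-j)+\binom{j}{2}$ by $\binom{m}{2}-(m-1)j$ --- so your $C_0$ is not a constant, and the stray factor $(q^j)^{m-1}$ has to be absorbed into the polynomial (which still works, since it converts the degree-$(m-1)$ polynomial $\binom{c-k-j-1}{m-1}_q$ in $q^{-j}$ into one of degree at most $m-1$ in $q^j$). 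The paper avoids this absorption entirely by substituting $l=r-i$ instead of $j=i-k$: under $l=m-j$ the exponent $j(m-j)+\binom{m-j}{2}$ becomes literally $l(m-l)+\binom{l}{2}$ and $\binom{c-i-1}{r-k-1}_q=\binom{c-r+l-1}{r-k-1}_q$ is directly a polynomial of degree $m-1$ in $q^l$, so the $q$-Ruiz identity applies on the nose; the same remark applies to your treatment of part (ii).
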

\begin{proof}
(i) Let $R$ and $R'$ be subspaces of $GF(q)^n$ of dimension $r$ and put $k:=\dim(R\cap R')$. A simple verification shows that
\[(MM^*)_{R,R'}=\sum_{i=k}^r (-1)^{r-i}\frac{\binom{r-k}{i-k}_q\binom{n-2r+k}{c-r-i+k}_q\binom{c-i-1}{r-i}_q}{\binom{n-r}{c-r}_q\binom{n-c}{r-i}_q}\cdot\frac{1}{q^{(i-k)(r-i)+\binom{r-i}{2}}}.\]
If $R=R'$, then $k=r$ and $(MM^*)_{R,R'}=1$. Now, assume that $R\neq R'$, i.e. $k<r$. Then
\begin{align*}
(MM^*)_{R,R'}&=\frac{[c-r]_q(-1)^r}{[r-k]_q\binom{n-r}{r-k}_q}\sum_{i=k}^r(-1)^i\binom{c-i-1}{r-k-1}_q\binom{r-k}{i-k}_q\cdot\frac{1}{q^{(i-k)(r-i)+\binom{r-i}{2}}}\\
&=\frac{[c-r]_q(-1)^{r-k}}{[r-k]_q\binom{n-r}{r-k}_q}\sum_{j=0}^{r-k}(-1)^j\binom{c-k-j-1}{r-k-1}_q\binom{r-k}{j}_q\cdot\frac{1}{q^{j(r-k-j)+\binom{r-k-j}{2}}}\\
&=\frac{[c-r]_q(-1)^{r-k}}{[r-k]_q\binom{n-r}{r-k}_q}\sum_{l=0}^{r-k}(-1)^l\binom{c-r+l-1}{r-k-1}_q\binom{r-k}{l}_q\cdot\frac{1}{q^{l(r-k-l)+\binom{l}{2}}}=0
\end{align*}
by Corollary \ref{Ruiz's q-analogue formula} for $\binom{c-r+l-1}{r-k-1}_q$ is a polynomial of order less than $r-k$ in $q^l$.

(ii) Let $C$ and $C'$ be subspaces of $GF(q)^n$ of dimension $c$ and put $k:=\dim(C\cap C')$. Since $n\leq r+c$, we have $N=r+c$ and $2c-n\leq k\leq c$. A simple verification shows that
\[(M^*M)_{C,C'}=\sum_{i=\max\{0,r+k-c\}}^{\min\{k,r\}}(-1)^{r-i}\frac{\binom{k}{i}_q\binom{c-k}{r-i}_q\binom{c-i-1}{r-i}_q}{\binom{c}{r}_q\binom{r}{i}_q}\cdot\frac{1}{q^{(c-r-k+i)(r-i)+\binom{r-i}{2}}}.\]
If $C=C'$, then $k=c$ and $(M^*M)_{C,C'}=1$. Now, suppose that $C\neq C'$, i.e. $k<c$. Then
\begin{align*}
(M^*M)_{C,C'}&=\frac{[c-r]_q(-1)^r}{[c-k]_q\binom{c}{k}_q}\sum_{i=\max\set{0,k+r-c}}^{\min\set{k,r}}(-1)^i\binom{c-i-1}{c-k-1}_q\binom{c-k}{r-i}_q\cdot\frac{1}{q^{(c-r-k+i)(r-i)+\binom{r-i}{2}}}\\
&=\frac{[c-r]_q}{[c-k]_q\binom{c}{k}_q}\sum_{j=\max\set{0,r-k}}^{\min\set{r,c-k}}(-1)^j\binom{c-r+j-1}{c-k-1}_q\binom{c-k}{j}_q\cdot\frac{1}{q^{(c-k-j)j+\binom{j}{2}}}.
\end{align*}
From $2c-k=\dim(C+C')\leq n\leq r+c$, we get $c-k\leq r$ so that $\min\{r,c-k\}=c-k$. Also, $\binom{c-r+j-1}{c-k-1}=0$ when $j<r-k$. Thus
\[(M^*M)_{C,C'}=\frac{[c-r]_q}{[c-k]_q\binom{c}{k}_q}\sum_{j=0}^{c-k}(-1)^j\binom{c-r+j-1}{c-k-1}_q\binom{c-k}{j}_q\cdot\frac{1}{q^{(c-k-j)j+\binom{j}{2}}}=0\]
by Corollary \ref{Ruiz's q-analogue formula} for $\binom{c-r+j-1}{c-k-1}$ is a polynomial of degree less than $c-k$ in $q^j$.
\end{proof}

Utilizing the same line of proof as in Theorem \ref{set inclusion incidence matrix: mpinv} we can prove the following.
\begin{theorem}\label{set inclusion incidence matrix: mpinv}
$M^+=M^*$.
\end{theorem}

As in the previous section, $M^*$ yields an explicit formula for the Moore-Penrose inverse of $M$ over a field of characteristic $p$ provided that $p$ does not divides $\binom{N-r}{c-r}_q\binom{N-c}{0}_q\cdots\binom{N-c}{r}_qq$, for instance for any $p>\max\{n-r,c\}$ not dividing $q$.
\section{$t$-Design inclusion incidence matrices}
A $t-(v,k,\lambda)$ design $\calD$ is a collection of blocks as $k$-subsets of a $v$-element set, say $[v]$, such that every $t$-subset of $[v]$ belongs to $\lambda$ blocks. It is known that every $t-(v,k,\lambda)$ design is an $s-(v,k,\lambda_s)$, where
\[\lambda_s=\lambda\cdot\frac{\binom{v-s}{t-s}}{\binom{k-s}{t-s}}\]
for all $0\leq s\leq t$. Let $b:=\lambda \binom{v}{t}/\binom{k}{t}$ be the number of blocks in $\calD$. For any $0\leq s\leq t$, let $M_s:=M_s(t,v,k,\lambda)$ be the $\binom{v}{s}\times b$ matrix defined as follows: rows and columns of $M$ are indexed by $s$-subsets of $[v]$ and blocks, respectively, with $(R,C)$-entry equal to $\delta_{R\subseteq C}$. It is a well-known result that $M_1M_1^T=(\lambda_1-\lambda_2)I+\lambda_2J$, where $I$ and $J$ are the identity matrix and the matrix of all $1$'s with appropriate sizes, respectively. Using this fact, we can simply compute the Moore-Penrose inverse of the point-block incidence matrix $M_1$ of block designs.
\begin{proposition}
We have $M_1^+=M_1^T((\lambda_1-\lambda_2)I+\lambda_2J)^{-1}$. More precisely,
\[(M_1^+)_{B,u}=\begin{cases}
\frac{1}{\lambda_1},&u\in B,\\
-\frac{1}{\lambda_1}\cdot\frac{k-1}{v-k},&u\notin B.
\end{cases}\]
\end{proposition}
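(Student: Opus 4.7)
The plan is to start from the identity $M_1 M_1^T = (\lambda_1-\lambda_2)I + \lambda_2 J$ quoted just before the statement. Set $A := (\lambda_1-\lambda_2)I + \lambda_2 J$. Its spectrum consists of $\lambda_1-\lambda_2$ with multiplicity $v-1$ and $\lambda_1-\lambda_2+v\lambda_2$ with multiplicity $1$, so $A$ is invertible for any non-degenerate design. Define $G := M_1^T A^{-1}$; I claim $G = M_1^+$.

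Verifying the four Penrose equations is then immediate. Indeed $M_1 G = (M_1 M_1^T) A^{-1} = I$, which is symmetric and forces $M_1 G M_1 = M_1$ as well as $G M_1 G = G$. Moreover $G M_1 = M_1^T A^{-1} M_1$ is visibly symmetric since $A^{-1}$ is. By uniqueness of the Moore-Penrose inverse, $G = M_1^+$.

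For the explicit entries, I would first record, by a double count on row sums of $M_1 M_1^T$, the identity $\lambda_1 + (v-1)\lambda_2 = k\lambda_1$; equivalently $\lambda_2 = \lambda_1(k-1)/(v-1)$ and $\lambda_1 - \lambda_2 = \lambda_1(v-k)/(v-1)$. The standard rank-one inversion now gives
\[A^{-1} = \frac{1}{\lambda_1-\lambda_2}\left(I - \frac{\lambda_2}{k\lambda_1}\,J\right).\]
Substituting this into $G = M_1^T A^{-1}$ and using $(M_1^+)_{B,u} = \sum_{u' \in B} A^{-1}_{u',u}$, I split on whether $u \in B$. When $u \in B$, one of the $k$ summands picks up the $I$-part, yielding $\tfrac{1}{\lambda_1-\lambda_2}\bigl(1 - \tfrac{k\lambda_2}{k\lambda_1}\bigr) = \tfrac{1}{\lambda_1}$; when $u\notin B$, only the $J$-part survives, giving $-\tfrac{\lambda_2}{\lambda_1(\lambda_1-\lambda_2)}$, and the two simplifications above reduce this to $-\tfrac{1}{\lambda_1}\cdot\tfrac{k-1}{v-k}$.

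The main obstacle is really just the arithmetic in the case split, and in particular the observation that the top eigenvalue of $A$ simplifies to $k\lambda_1$, which is what makes the coefficient $\lambda_2/(k\lambda_1)$ align with the $1$ in the $u\in B$ case. It is worth noting in passing that the invertibility of $A$ (equivalently, $\lambda_1 \neq \lambda_2$ and $\lambda_1\neq 0$) excludes only degenerate designs such as $k=v$.
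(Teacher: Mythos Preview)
Your proof is correct and follows essentially the same route as the paper: both arguments set $A=(\lambda_1-\lambda_2)I+\lambda_2 J$, use $M_1M_1^T=A$ to get $M_1(M_1^TA^{-1})=I$ and hence three of the Penrose conditions, note that $M_1^TA^{-1}M_1$ is symmetric because $A$ is, and then read off the entries from the explicit inverse of a matrix of the form $xI+yJ$. Your upfront observation that the nonzero eigenvalue $\lambda_1-\lambda_2+v\lambda_2$ simplifies to $k\lambda_1$ streamlines the final arithmetic slightly compared to the paper, but the argument is otherwise identical.
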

\begin{proof}
We know from \cite[Lemma 4.4]{rbb} that the $n\times n$ matrix $xI+yJ$ is an invertible matrix if and only if $x(x+ny)\neq0$, from which it follows that $X:=(\lambda_1-\lambda_2)I+\lambda_2J$ is an invertible matrix. Since $M_1(M_1^TX^{-1})=I$ we observe that 
\[M_1(M_1^TX^{-1})M_1=M_1\quad\text{and}\quad(M_1^TX^{-1})M_1(M_1^TX^{-1})=M_1^TX^{-1}.\]
On the other hand, $(M_1^TX^{-1})M_1$ is symmetric as $X$ is a symmetric matrix. Thus $M_1^TX^{-1}=M_1^+$. Now, we compute the entries of $M_1^+$. Let $u$ be a point and $B$ be a block. First observe that the inverse of an invertible matrix of the form $xI+yJ$ is equal to $\frac{1}{x}I-\frac{y}{x(x+ny)}J$. For $u\notin B$, we have
\begin{align*}
(M_1^+)_{B,u}&=(M_1^TX^{-1})_{B,u}=\sum_{v\in B}(X^{-1})_{v,u}\\
&=-\frac{k\lambda_2}{(\lambda_1-\lambda_2)(\lambda_1-\lambda_2+v\lambda_2)}=-\frac{1}{\lambda_1}\cdot\frac{k-1}{v-k}
\end{align*}
as $\lambda_2=(k-1)/(v-1)\lambda_1$. Also, for $u\in B$, we get
\begin{align*}
(M_1^+)_{B,u}&=(M_1^TX^{-1})_{B,u}=\sum_{v\in B}(X^{-1})_{v,u}\\
&=\frac{1}{\lambda_1-\lambda_2}-\frac{k\lambda_2}{(\lambda_1-\lambda_2)(\lambda_1-\lambda_2+b\lambda_2)}\\
&=\frac{1}{\lambda_1-\lambda_2}-\frac{1}{\lambda_1}\cdot\frac{k-1}{v-k}=\frac{1}{\lambda_1},
\end{align*}
as required
\end{proof}

Unexpectedly, the Moore-Penrose inverse of matrices $M_s$ when $s\geq 2$ not only depends on the parameters $t$, $v$, $k$, $\lambda$, but also depends on the internal structure of the designs. Indeed, a computer search reveals that for every set of small parameters $t$, $v$, $k$, $\lambda$ for which all block designs are classified (see design theory database \cite{lhs}) the matrix $M_s^+$ of any design receive uniquely determined set of entries with few exceptions.

\end{document}